\newtheorem  {theorem}                  {Theorem}
\newtheorem* {theorem*}                   {Theorem}
\newtheorem {lemma}[theorem] {Lemma}
\newtheorem {prop}[theorem]      {Proposition}
\newtheorem* {prop*}     {Proposition}
\newtheorem {corollary}[theorem]      {Corollary}
\theoremstyle{definition}
\newtheorem {defi}[theorem] {Definition}
\newtheorem {Remark} [theorem]         {Remark}
\newtheorem {Example} [theorem]    {Example}
\newtheorem* {Example*}    {Example}
\def\R{\mathbb{R}}
\newcommand{\pp}[2]{\frac{\partial#1}{\partial#2}}
\newcommand{\tpp}[2]{\tfrac{\partial#1}{\partial#2}}
\newcommand{\len}{\operatorname{length}}
\title{The periodic orbit conjecture for steady Euler flows}
\author{Robert Cardona}\address{ Robert Cardona,
Laboratory of Geometry and Dynamical Systems, Department of Mathematics, Universitat Polit\`{e}cnica de Catalunya and BGSMath Barcelona Graduate School of
Mathematics,  Avinguda del Doctor Mara\~{n}on 44-50, 08028 , Barcelona, Spain  \it{e-mail: robert.cardona@upc.edu}
 }
 \thanks{The author acknowledges financial support from the Spanish Ministry of Economy and Competitiveness, through the Mar\'ia de Maeztu Programme for Units of Excellence in R\& D (MDM-2014-0445) via an FPI grant. The author is partially supported by the grants MTM2015-69135-P/FEDER and PID2019-
103849GB-I00 / AEI / 10.13039/501100011033, and AGAUR grant 2017SGR932.}
\begin{document}

\begin{abstract}
The periodic orbit conjecture states that, on closed manifolds, the set of lengths of the orbits of a non-vanishing vector field all whose orbits are closed admits an upper bound. This conjecture is known to be false in general due to a counterexample by Sullivan. However, it is satisfied under the geometric condition of being geodesible. In this work, we use the recent characterization of Eulerisable flows (or more generally flows admitting a strongly adapted one-form) to prove that the conjecture remains true for this larger class of vector fields. 
\end{abstract}

\maketitle

\section{Introduction}

A classical question in the study of compact foliations on compact manifolds is the existence of an upper bound on the volume of their leaves. For one-dimensional foliations, this was known as the periodic orbit conjecture.\newline

\textit{Periodic orbit conjecture:} Let $X$ be a non-vanishing vector field on a manifold $M$ such that every orbit of $X$ is closed. Then there is an upper bound on the lengths of the orbits of $X$. \newline

This conjecture was proved in dimension three by Epstein \cite{Eps}. In higher dimensions, however, Sullivan constructed a beautiful counterexample on a five-dimensional compact manifold \cite{S1}. A counterexample in the sharpest case of dimension four was settled by Epstein and Vogt \cite{EV} a couple of years later. A theorem by Wadsley \cite{W} shows that a necessary and sufficient geometric condition for the conjecture to hold is that the vector field $X$ is geodesible, i.e. there is some metric making its orbits geodesics. In this work, we show that even if we allow $X$ to be Eulerisable (or more generally to admit a strongly adapted one-form), which is a larger class of vector fields, the conjecture is still satisfied.  A vector field is Eulerisable if it is a steady solution to the Euler equations for some metric. Respectively, a vector field $X$ admits a strongly adapted one-form if there is some one-form $\alpha$ such that $\alpha(X)>0$ and $\iota_Xd\alpha$ is exact.

\begin{theorem}\label{main}
Eulerisable fields (more generally, flows admitting a strongly adapted one-form) on closed manifolds satisfy the periodic orbit conjecture.
\end{theorem}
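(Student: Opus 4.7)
The plan is to bound the period $T(\gamma)$ of each closed orbit by the ``action'' $A(\gamma) := \int_\gamma \alpha$, and then show $A$ is uniformly bounded by reducing to Wadsley's theorem on regular level sets of a first integral. First I observe that $f$ is automatically a first integral of $X$: applying $\iota_X$ to $\iota_X d\alpha = df$ gives $X(f) = \iota_X\iota_X d\alpha = 0$. Hence every closed orbit $\gamma$ lies in a single level set $M_c = f^{-1}(c)$, with $c$ in the compact interval $[\min_M f, \max_M f]$. Since $\alpha(X)>0$ on compact $M$, we have $c_0 := \min_M \alpha(X) > 0$, and
\[ A(\gamma) = \int_0^{T(\gamma)} \alpha(X)\,dt \geq c_0 \, T(\gamma), \]
so it suffices to prove $\sup_\gamma A(\gamma) < \infty$.

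For any regular value $c$ of $f$, the restriction $\alpha|_{M_c}$ is a Wadsley one-form for $X|_{M_c}$: the positivity $\alpha(X)>0$ is inherited, and $\iota_{X|_{M_c}} d(\alpha|_{M_c}) = (df)|_{M_c} = 0$ since $f$ is constant on $M_c$. Therefore $X|_{M_c}$ is geodesible on $M_c$, and by Wadsley's theorem its periods are uniformly bounded on $M_c$. Moreover, applying Stokes' theorem to a tube of orbits $S$ swept by a curve $\sigma \subset M_c$ joining two orbits $\gamma_0,\gamma_1$ in the same connected component gives $A(\gamma_1) - A(\gamma_0) = \int_S d\alpha$; along $S$ the 2-form $d\alpha$ pulls back to $d\alpha(V,X)\,ds\wedge dt = -df(V)\,ds\wedge dt$ with $V \in TM_c$, which vanishes. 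Hence $A$ is constant on each connected component of $M_c$.

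The final step is to bound $A$ as $c$ varies over the compact interval $[\min_M f, \max_M f]$. A similar tube-of-orbits computation across nearby level sets, parametrizing a transversal path $\sigma$ by its $f$-value, gives the formula $A(c') - A(c) = -\int_c^{c'} T_\sigma(s)\,ds$ where $T_\sigma(s)$ denotes the (Wadsley-bounded) period of the orbit through $\sigma(s)$; combined with the local period bounds, this shows $A$ is locally Lipschitz in $c$ between consecutive critical values of $f$. The main obstacle, which I expect to be the heart of the argument, is to rule out blow-up of $A$ as $c$ approaches a critical value $c^\ast$ of $f$: a priori one could imagine closed orbits in $M_c$ winding an unbounded number of times around an orbit in $M_{c^\ast}$, forcing both $T$ and $A$ to diverge. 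I expect this to be precluded by a local analysis near critical points of $f$ (where $X$ is still non-vanishing and the structural identity $\iota_X d\alpha = df$ persists), yielding a uniform local period bound that, by compactness of the finite set of critical values, upgrades to the desired global bound on $A$ and hence on $T$.
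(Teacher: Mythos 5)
Your proposal is not complete, and the gap you flag at the end is exactly where the real difficulty lies; the paper takes an entirely different route that sidesteps it.

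The early parts of your argument are sound: $B$ (your $f$) is a first integral of $X$, so closed orbits lie in level sets of $B$; on a regular level set $M_c$ the restriction of $\alpha$ is a Gluck/Wadsley one-form, making $X|_{M_c}$ geodesible with bounded periods; and the tube-of-orbits Stokes computation $A(c')-A(c)=\int_c^{c'}T(s)\,ds$, combined with $A\geq c_0 T$, gives a Gronwall-type bound on $A$ on each interval of regular values. But this leaves two unresolved problems. First, the Stokes computation requires choosing a global transversal $\sigma$ whose image meets each orbit exactly once and on which the period function is continuous; in the presence of nontrivial holonomy (which the Edwards--Millett--Sullivan theory shows is unavoidable near the ``bad set'') the orbit space is not a manifold, the period function is only lower semicontinuous, and a nearby orbit may be a multiple cover, so $\partial S$ need not equal $\gamma(c')-\gamma(c)$. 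Your regular-level argument therefore needs the full EMS machinery rather than a naive tube. Second, and more seriously, the bad set $B_1$ (where lengths are not locally bounded) can sit inside a singular level $B^{-1}(c^\ast)$, and you explicitly acknowledge that you have no control there: ``I expect this to be precluded by a local analysis near critical points.'' No such analysis is supplied, and it is not obvious one exists --- one cannot restrict $\alpha$ to $B^{-1}(c^\ast)$ and invoke Wadsley since that set is not a manifold, and the Gronwall bound only controls $A$ from one side of $c^\ast$ with no mechanism to connect components of the regular set across a critical value. This step is the heart of the theorem and is missing.

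The paper's proof does not pass through level sets of $B$ or Wadsley's theorem at all. Instead it assumes $B_1\neq\emptyset$, uses the Edwards--Millett--Sullivan moving-leaf proposition to produce a one-parameter family of leaves $L_t$ approaching $B_1$ with $\operatorname{length}(L_t)\to\infty$, normalizes to obtain a positive foliation cycle $\eta$ supported on $B_1$ as a weak limit of $\frac{1}{n_i}L_{t_i}$, and then builds the tangent $2$-chains $A_i=\frac{1}{n_i}\{L_t : t\in[0,t_i]\}$. The key estimate is that, after choosing the leaves so that $\operatorname{length}(L_{t_i})$ is monotone and after extracting subsequences on which $B(t_i)$ clusters, the flux $A_{i_r}(d\alpha)$ can be driven to zero, contradicting $\partial A_{i_r}(\alpha)\to\eta(\alpha)>0$. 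This directly exploits the current-theoretic characterization of flows with a strongly adapted one-form and deals with the bad set head-on, which is precisely what your proposal leaves open.
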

A corollary of Theorem \ref{main} is a new dynamical obstruction for a vector field to be topologically conjugate to an Eulerisable flow. It was previously known that the presence of plugs is an obstruction to such property \cite{PRT}.
\begin{corollary}\label{coro2}
A volume-preserving non-vanishing vector field $X$ on a compact manifold all whose orbits are closed but whose lengths are not locally bounded, is not topologically conjugate to an Eulerisable flow.
\end{corollary}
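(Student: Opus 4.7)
The plan is a short contradiction argument that directly leverages Theorem~\ref{main}. Suppose $X$ on $M$ is topologically conjugate to an Eulerisable flow $Y$ on some closed manifold $M'$, via a homeomorphism $\phi : M \to M'$ intertwining the flows: $\phi \circ \Phi_t^X = \Phi_t^Y \circ \phi$ for every $t \in \R$.

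First I would use the time-preserving nature of $\phi$ to note that the minimal period is a pointwise conjugacy invariant: the $X$-orbit through $x$ and the $Y$-orbit through $\phi(x)$ are both closed with the same minimal period. In particular, every orbit of $Y$ is closed, and the $X$-length function on $M$ agrees, via $\phi$, with the $Y$-length function on $M'$.

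Second, I would apply Theorem~\ref{main} to the Eulerisable flow $Y$: this provides a uniform upper bound $L$ on the lengths of the orbits of $Y$, and, pulling back through $\phi$, the same bound on the lengths of the orbits of $X$. A global bound is a fortiori a local bound at every point, contradicting the hypothesis that the orbit lengths of $X$ are not locally bounded. Hence no such conjugacy can exist.

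The only conceptual step worth flagging is the first one: identifying "length" with the minimal period, an intrinsic dynamical invariant preserved under time-preserving topological conjugacy. The volume-preserving hypothesis plays no role in the contradiction itself; it serves to situate $X$ in the natural category of Eulerisable candidates (which always preserve a metric volume form), so that the obstruction is stated within a single framework.
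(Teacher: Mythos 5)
Your argument is correct and is exactly the deduction the paper intends; the corollary is stated immediately after Theorem~\ref{main} without a separate written proof, so the ``paper's own proof'' is precisely this contradiction via pullback of the global bound. One small point worth sharpening: the minimal period, not the length, is the genuine conjugacy invariant (length depends on a metric and on $|X|$), so the $X$-length and $Y$-length functions do not literally ``agree'' under $\phi$. What one actually uses is that on a compact manifold with $X$ non-vanishing, $\min_M|X|\cdot T \le \operatorname{length} \le \max_M|X|\cdot T$, so boundedness of lengths is equivalent to boundedness of periods --- the paper records this metric-independence explicitly in Section~3 --- and the contradiction then proceeds as you describe. Your closing observation that the volume-preserving hypothesis on $X$ is not needed for the contradiction is also accurate; it is there to place $X$ in the natural class of candidates, since Eulerisable fields are volume-preserving by Definition~\ref{eulchar}.
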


Such flows exist in dimension $n\geq 4$, as our analysis of Thurston-Sullivan's example in Section \ref{ss:exTS} shows.\\

In the last section, we take a look at Wadsley's theorem and show that any vector field satisfying the periodic orbit conjecture is in fact a Beltrami-type stationary solution to the Euler equations for some metric. Combining this with the main theorem, we get the following consequence.

\begin{corollary}\label{coro}
Let $X$ be a non-vanishing steady solution to the Euler equations all whose orbits are closed. Then there is some other metric $g$ for which $X$ is a Beltrami type steady solution to the Euler equations.
\end{corollary}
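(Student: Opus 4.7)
The plan is to concatenate Theorem~\ref{main} with the characterization of vector fields satisfying the periodic orbit conjecture that the final section is devoted to. Since $X$ is, by hypothesis, a steady solution to the Euler equations for some metric $g_0$, it is by definition Eulerisable and in particular admits a strongly adapted one-form, namely the metric dual of $X$. Combined with the hypothesis that every orbit of $X$ is closed, Theorem~\ref{main} then applies and produces a uniform upper bound on the lengths of the orbits of $X$.

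At this point, $X$ is a non-vanishing vector field whose orbits are all closed and of uniformly bounded length, that is, $X$ satisfies the periodic orbit conjecture. By Wadsley's theorem, this is equivalent to $X$ being geodesible: there exists a metric $g_1$ in which every orbit of $X$ is a geodesic, and after rescaling one may assume $|X|_{g_1}=1$, so that the dual one-form $\alpha=g_1(X,\cdot)$ satisfies $\alpha(X)=1$ and $\iota_X d\alpha=0$. For the corollary we need more: the geodesible structure must be further refined so that $X$ becomes a Beltrami field for the resulting metric. I would invoke precisely the statement announced for the final section, namely that any vector field satisfying the periodic orbit conjecture is a Beltrami-type steady Euler solution for some metric $g$. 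Applied to $X$, this produces the desired $g$ and closes the argument, since Beltrami fields are automatically steady Euler flows (with constant Bernoulli function).

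The main obstacle is not the combinatorics of this concatenation but the refinement of Wadsley's theorem in the last section. Geodesibility on its own only forces the vanishing $\iota_X d\alpha=0$, which is a condition on a $1$-form, whereas the Beltrami condition $d\alpha=f\,\iota_X\mu_g$ is a condition on a $2$-form and is strictly stronger. Producing a metric that simultaneously makes the orbits of $X$ geodesic and $d\alpha$ proportional to $\iota_X\mu_g$ is where the real work lies; the idea is to exploit the Seifert-type fibration supplied by the bounded closed orbits in order to adjust the transverse part of the metric, which is the freedom left unused by the geodesic condition, so that $d\alpha$ acquires the required relationship with the ambient volume form.
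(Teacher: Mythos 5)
Your argument for the corollary itself is exactly the paper's: Theorem~\ref{main} gives the uniform bound on orbit lengths, and Proposition~\ref{prop:s1euler} then produces the metric. That concatenation is correct, and no extra combinatorics is involved.

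The speculation in your final paragraph, however, misidentifies what ``Beltrami-type'' means in this paper and hence where the real work in Proposition~\ref{prop:s1euler} lies. Here a stationary Euler solution is of Beltrami type when its Bernoulli function is \emph{constant}, i.e.\ $\iota_X d\alpha=0$ together with $X$ preserving the Riemannian volume of $g$; it does \emph{not} require $d\alpha$ to be proportional to $\iota_X\mu_g$, which is the odd-dimensional curl condition for a Beltrami \emph{field} and is neither claimed nor needed in Corollary~\ref{coro}. So $\iota_X d\alpha=0$ is not ``strictly weaker'' than what is wanted: it is precisely the constant-Bernoulli half and comes for free from Wadsley's geodesibility after normalizing $|X|=1$. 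The genuine refinement Proposition~\ref{prop:s1euler} supplies beyond Wadsley is the other half of the Euler system, volume-preservation: geodesible fields need not preserve any Riemannian volume, so one must arrange a metric whose volume form is invariant under $X$. This is done by averaging a metric over the $S^1$-action generated by the reparametrized field $\tilde X=fX$, so that $\tilde X$ is Killing (hence volume-preserving) for the averaged metric, then passing to $\mu=f\mu_2$ so that $X$ itself preserves $\mu$, and finally assembling $g$ to have dual one-form $\alpha$ and Riemannian volume $\mu$. One last caution: your parenthetical ``Beltrami fields are automatically steady Euler flows'' is false in odd dimensions above three, as the paper explicitly recalls; it is only Beltrami-\emph{type} solutions (constant Bernoulli) that are automatically steady Euler.
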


\textbf{Acknowledgements:} The author is grateful to Daniel Peralta-Salas, who proposed this question during the author's stay in Madrid for the Workshop on Geometric Methods in Symplectic Topology in December 2019. Thanks to Francisco Torres de Lizaur for useful comments. The author is grateful to the anonymous referee of this work for several comments which improved this paper.

\section{Currents and Eulerisable fields}

Let $M$ be a closed smooth manifold and consider $\Omega^k(M)$ the space of differential $k$-forms on $M$, which is endowed with the natural $C^\infty$-topology. A $k$-current is a continuous linear function over $\Omega^k(M)$. We denote the space of $k$-currents by $\mathcal{Z}^k(M)$. A concise introduction to the theory of currents is contained in \cite{Gl}, we will give here only some basic background.

Currents are equipped with a ``boundary" operator $\partial:\mathcal{Z}^k(M) \rightarrow \mathcal{Z}^{k-1}(M)$, which is continuous, defined by 
$$ \partial c(\omega)=c(d\omega), $$
where $c$ is a $k$-current, $\omega$ is a $k$-form and $d$ denotes the usual exterior derivative. A current $c$ which has no boundary is called a ``cycle". A classical theorem by Schwartz establishes that the dual space to $\mathcal{Z}^k(M)$ is $\Omega^k(M)$. 

\begin{Example}
An example of a $k$-current is given by integration along a $k$-chain, i.e. $c(\omega)=\int_c \omega$ where $c$ is a $k$-chain. This example will be of our interest, as it is used for the characterization of geodesible and Eulerisable vector fields.
\end{Example}

Let us now fix some non-vanishing vector field $X$ in $M$. To each point $p\in M$, there is an associated Dirac $1$-current
\begin{align*}
\delta_p: \Omega^1(M)&\longrightarrow \mathbb{R}\\
			\alpha &\longmapsto \alpha_p(X_p).
\end{align*}
The closed convex cone in $\mathcal{Z}^1(M)$ generated by Dirac $1$-currents is called the space of foliation currents. A property of such currents is that for a one-form $\alpha$ such that $\alpha(X)>0$, any foliation current $z$ satisfies $z(\alpha)>0$. Another class of $2$-currents that we need to introduce is that of tangent $2$-chains: two-dimensional chains satisfying that the vector field $X$ is tangent to them. We will denote by $\mathcal{B}_X$ the set of boundaries of tangent $2$-chains
$$\mathcal{B}_X=\{ \partial c\enspace | \enspace c \text{ is a tangent } 2\text{-chain of }X\}. $$
The theory of currents and cycles is a useful tool to study foliations in a broad sense, and for one-dimensional foliations, it was used to topologically characterize geodesible vector fields.

\begin{defi}
A vector field $X$ is \textbf{geodesible} if there exists a Riemannian metric such that the orbits of $X$ are geodesics.
\end{defi}
A geometric characterization of geodesible fields, due to Gluck, states that a vector field is geodesible if and only if there is a one-form $\alpha$ such that $\alpha(X)>0$ and $\iota_Xd\alpha=0$. We denote the space of foliation cycles of $X$ by $\mathcal{C}_X$. The topological characterization in terms of currents was proved by Sullivan \cite{S2}.

\begin{theorem}\label{thm:sul}
A non-vanishing vector field on a closed manifold is geodesible if and only if there is no sequence of tangent $2$-chains whose boundary approximates a foliation cycle (i.e. $\overline{\mathcal{B}_X}\cap \mathcal{C}_X=\{0\}$).
\end{theorem}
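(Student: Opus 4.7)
The plan is to reduce the theorem to Gluck's characterization of geodesibility, which asserts that $X$ is geodesible iff there exists a one-form $\alpha$ with $\alpha(X)>0$ and $\iota_X d\alpha = 0$. The claim then becomes a separation statement in the locally convex space $\mathcal{Z}^1(M)$: such an $\alpha$ exists precisely when the closed cones $\overline{\mathcal{B}_X}$ and $\mathcal{C}_X$ intersect trivially. Schwartz's theorem, identifying the continuous dual of $\mathcal{Z}^1(M)$ with $\Omega^1(M)$, is what converts a separating functional into an honest differential form.

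For the direct implication, assume $X$ is geodesible with a Gluck form $\alpha$. For a tangent $2$-chain $c$, at every point the tangent plane contains $X$, so picking a complementary tangent vector $Y$ we have $d\alpha(X,Y) = (\iota_X d\alpha)(Y) = 0$; hence $d\alpha|_c = 0$ and
\[
\partial c(\alpha) = c(d\alpha) = 0.
\]
Continuity of the pairing extends this to every $z\in\overline{\mathcal{B}_X}$. On the other hand, each Dirac satisfies $\delta_p(\alpha)=\alpha(X_p)\geq c_0>0$ for some constant $c_0$ (by compactness of $M$), so any limit of positive combinations of Diracs pairs non-negatively with $\alpha$, with strict inequality unless the total weight tends to zero. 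In particular any nonzero foliation cycle pairs strictly positively with $\alpha$, forcing $\overline{\mathcal{B}_X}\cap\mathcal{C}_X = \{0\}$.

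For the converse, assume $\overline{\mathcal{B}_X}\cap\mathcal{C}_X=\{0\}$ and let $F_X$ denote the closed convex cone of foliation currents. First I would upgrade the hypothesis to $\overline{\mathcal{B}_X}\cap F_X = \{0\}$: any boundary $\partial c$ is automatically a cycle, so a sequence in $\mathcal{B}_X$ converging to some $z\in F_X$ forces $z$ to lie in $F_X\cap\ker\partial=\mathcal{C}_X$. Next, using compactness of $M$ and the non-vanishing of $X$, one produces a compact convex base $K\subset F_X$ by normalizing foliation currents by total mass. The sets $K$ and $\overline{\mathcal{B}_X}$ are then disjoint, closed, and convex, with $K$ compact and $\overline{\mathcal{B}_X}$ a closed linear subspace. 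Geometric Hahn--Banach yields a continuous linear functional strictly positive on $K$ and vanishing on $\overline{\mathcal{B}_X}$; by Schwartz's theorem this functional is pairing with a one-form $\alpha$. Positivity on each rescaled Dirac current gives $\alpha(X_p)>0$ pointwise, while vanishing on $\partial c$ for arbitrarily small tangent $2$-cells at every point yields the infinitesimal condition $\iota_X d\alpha = 0$, so $\alpha$ satisfies Gluck's criteria and $X$ is geodesible. The main obstacle is the convex-analytic step of constructing a compact base of $F_X$ in the appropriate topology on currents; once this is in place the rest is essentially formal.
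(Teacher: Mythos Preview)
The paper does not prove this theorem; it is quoted as a known result of Sullivan with a citation to \cite{S2} and no argument is supplied, so there is no in-paper proof to compare your proposal against.

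For what it is worth, your sketch is the standard Sullivan argument and is essentially correct. The forward direction follows immediately from Gluck's criterion as you say: $d\alpha$ vanishes on any $2$-plane containing $X$, so $\partial c(\alpha)=c(d\alpha)=0$ for every tangent $2$-chain, while any nonzero foliation current pairs strictly positively with $\alpha$. For the converse, your upgrade $\overline{\mathcal{B}_X}\cap F_X=\{0\}$ is legitimate (limits of boundaries are cycles), and the Hahn--Banach separation of a compact convex base $K\subset F_X$ from the closed linear subspace $\overline{\mathcal{B}_X}$, followed by Schwartz duality to identify the separating functional with a one-form, is exactly Sullivan's approach. The compact base exists because $p\mapsto\delta_p$ is continuous from the compact $M$ into $\mathcal{Z}^1(M)$, so its image is compact and one normalizes on a suitable hyperplane; this handles the obstacle you flagged. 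Recovering $\iota_X d\alpha=0$ from the vanishing on all tangent $2$-chain boundaries via small tangent disks is also the standard step.
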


Let us elaborate on the concrete meaning of ``approximation" in this context. This condition says that there is a sequence of tangent $2$-chains $c_k$ such that $\partial c_k$ converges to some foliation cycle $z$ in the weak topology. Convergence in the weak topology can be described \cite[Section 4.2]{Morg} by the condition that for any one-form $\gamma$ we have $\partial c_k(\gamma)\rightarrow z(\gamma)$.

In \cite{PRT}, a wider class of non-vanishing vector fields related to the stationary Euler equations in hydrodynamics is introduced and characterized using this very same language. The stationary Euler equations on a Riemannian manifold $(M,g)$, which model an ideal fluid in equilibrium, can be expressed in the dual language of forms:
\begin{align*}
\begin{cases}
\iota_Xd\alpha=-dB\\
d\iota_X\mu=0
\end{cases},
\end{align*}
where $X$ is the velocity field of the fluid, the one-form $\alpha=g(X,\cdot)$ is dual to $X$ with respect to the metric, and $\mu$ is the Riemannian volume form. The function $B$ is called the Bernoulli function, and it is constant if and only if $\iota_Xd\alpha$ vanishes. Confer \cite{AK} for an introduction to geometrical and topological aspects of hydrodynamics.

\begin{defi}\label{eulchar}
Let $M$ be manifold with a volume form $\mu$. A volume-preserving vector field $X$ is \textbf{Eulerisable} if there is a metric $g$ on $M$ for which $X$ satisfies the stationary Euler equations for some Bernoulli function $B:M \rightarrow \R$.
\end{defi}
Eulerisable fields which are not geodesible are constructed in \cite{CV}, proving that it is a wider class of vector fields than volume-preserving geodesible fields. Non-vanishing Eulerisable fields have both a geometric and a topological characterization. Consider the following linear subspace of $1$-currents: 
$$ \mathcal{F}_{d\alpha}=\{ \partial c \enspace | \enspace  c \text{ is a 2-chain s.t. } \int_c d\alpha=0\}.$$
Below, we give a partial statement of Theorem 5.2 in \cite{PRT} which is enough for our purposes. The complete statement is a generalization of Sullivan's characterization of geodesible fields.
\begin{theorem}[\cite{PRT}]\label{thm:eulerisable}
Let $X$ be a non-vanishing volume-preserving vector field on a closed manifold of dimension $n\geq 3$. Then $X$ is Eulerisable if and only if there is some $\alpha$ so that $\alpha(X)>0$ and $\iota_Xd\alpha$ is exact. Furthermore, if $X$ is Eulerisable with one-form $\alpha$, no sequence of elements in $\mathcal{F}_{d\alpha}$ approximates a non-trivial foliation cycle of $X$ (i.e. $\overline{\mathcal{F}_{d\alpha}}\cap \mathcal{C}_X=\{ 0 \}$).
\end{theorem}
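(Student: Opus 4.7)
The forward direction ($\Rightarrow$) is immediate: if $X$ solves the stationary Euler equations for some metric $g$, take $\alpha := g(X,\cdot)$. Then $\alpha(X) = g(X,X) > 0$ because $X$ is non-vanishing, and the first Euler equation $\iota_X d\alpha = -dB$ exhibits $\iota_X d\alpha$ as exact.

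For the converse, the plan is to manufacture a Riemannian metric $g$ satisfying $g(X,\cdot) = \alpha$ whose Riemannian volume form coincides with the preserved volume $\mu$. The prescription $g(X,\cdot) = \alpha$ already determines $g$ on the line $\mathbb{R}\langle X\rangle$ and on all mixed pairs: it forces $g(X,X) = \alpha(X) > 0$ and $X \perp_g \ker\alpha$. On the codimension-one distribution $\ker\alpha$, pick any smooth fiber metric $g_0$; this yields a candidate metric whose Riemannian volume form differs from $\mu$ by some smooth positive factor $\rho$. Since conformally rescaling $g_0 \mapsto \lambda^2 g_0$ on $\ker\alpha$ multiplies the total volume form by $\lambda^{n-1}$, the choice $\lambda := \rho^{-1/(n-1)}$ produces a metric $g$ with $\mu_g = \mu$. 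The first Euler equation then holds with Bernoulli function $B = -f$, where $f$ is any primitive of the exact form $\iota_X d\alpha$, and the second Euler equation $d\iota_X\mu = 0$ is precisely the volume-preservation hypothesis $L_X\mu = 0$.

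For the ``furthermore'' assertion I would argue by contradiction. Suppose there exist $c_k \in \mathcal{F}_{d\alpha}$ with $\partial c_k \to z$ weakly for some nontrivial $z \in \mathcal{C}_X$. By definition of $\mathcal{F}_{d\alpha}$,
\begin{equation*}
\partial c_k(\alpha) \;=\; c_k(d\alpha) \;=\; \int_{c_k} d\alpha \;=\; 0
\end{equation*}
for every $k$, so weak convergence forces $z(\alpha) = 0$. On the other hand $z$ lies in the closed convex cone generated by Dirac currents $\delta_p$, each of which pairs with $\alpha$ to give $\alpha_p(X_p) \geq m := \min_M \alpha(X) > 0$ by compactness. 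A standard mass-lower-bound argument (if $z \neq 0$ there exists a test form $\beta$ with $z(\beta) \neq 0$, which forces the total masses of the approximating positive combinations to remain bounded away from zero) then gives $z(\alpha) > 0$, a contradiction.

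The main obstacle is the metric construction in the converse direction, where one has to verify simultaneously that the conformal factor $\lambda$ is smooth and globally well-defined, that the resulting bilinear form is positive definite everywhere, and that the Riemannian volume form hits $\mu$ on the nose. Once the metric is in hand, both Euler equations drop out of the defining hypotheses, and the non-approximation statement is essentially an exercise in pairing with $\alpha$ and exploiting the positivity of $\alpha(X)$.
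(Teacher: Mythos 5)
This theorem is cited from \cite{PRT} and the paper provides no proof of it --- it is stated as a ``partial statement of Theorem 5.2 in \cite{PRT}'', so there is no in-paper argument to compare against. That said, your proof is essentially correct and captures the intended content. The forward direction is immediate, as you say. Your converse construction (fixing $g(X,\cdot)=\alpha$, declaring $X\perp_g\ker\alpha$, choosing an arbitrary fiber metric on $\ker\alpha$ and then conformally rescaling by $\lambda=\rho^{-1/(n-1)}$ to hit $\mu_g=\mu$) is precisely the same metric construction the present paper itself uses in the proof of Proposition~\ref{prop:s1euler}, so this matches the technique the author evidently has in mind; the resulting $g$ is smooth and positive definite because $\alpha(X)>0$ and $\lambda>0$, and the two Euler equations drop out as you describe. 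For the ``furthermore'' part, there is a small notational slip: elements of $\mathcal{F}_{d\alpha}$ are \emph{already} boundaries, so you should not write ``$c_k\in\mathcal{F}_{d\alpha}$ with $\partial c_k\to z$'' but rather take $\eta_k=\partial c_k\in\mathcal{F}_{d\alpha}$ with $\eta_k\to z$; the subsequent computation $\eta_k(\alpha)=c_k(d\alpha)=0$ makes clear you mean the latter. The mass-lower-bound step (a nonzero element $z$ of the closed cone generated by Dirac $1$-currents satisfies $z(\alpha)>0$ whenever $\alpha(X)>0$) is exactly the positivity property the paper quotes for foliation currents, and your sketch of it is sound: if $z_n=\sum_i\lambda_i^{(n)}\delta_{p_i^{(n)}}\to z$ with $z(\alpha)=0$, then $\sum_i\lambda_i^{(n)}\to 0$ since each $\delta_p(\alpha)\geq\min_M\alpha(X)>0$, which forces $z(\beta)=0$ for every test form $\beta$ and hence $z=0$. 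Note that your argument for this last part in fact uses only $\alpha(X)>0$ (not the exactness of $\iota_Xd\alpha$); this is consistent, since the exactness hypothesis is what powers the unstated harder converse implication of Theorem 5.2 in \cite{PRT}, which the present paper deliberately omits.
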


\begin{Remark}
As introduced in \cite{Tao}, a non-vanishing vector field $X$ admits a strongly adapted one-form $\alpha$ if $\alpha(X)>0$ and $\iota_Xd\alpha$ is exact. An equivalent statement applies to this class of vector fields. If a non-vanishing vector field $X$ (not necessarily volume-preserving) on a closed manifold of dimension $n\geq 3$ admits a strongly adapted one-form $\alpha$, then no sequence of elements in $\mathcal{F}_{d\alpha}$ approximates a non-trivial foliation cycle of $X$.
\end{Remark}

\section{The periodic orbit conjecture for Eulerisable fields}

In this section, we first recall a counterexample to the periodic orbit conjecture given by Thurston and show that it is volume-preserving (and even Beltrami for some metric). Following \cite{C1}, recall that for a given vector field $X$ on a Riemannian manifold $(M,g)$ of odd dimension $2n+1$ we define its curl as the unique vector field $\omega$ satisfying the equation
\begin{equation}\label{eq:curl}
 \iota_\omega \mu=(d\alpha)^n,
 \end{equation}
where $\alpha=g(X,\cdot)$ is the one-form dual to $X$ by the metric. 

\begin{defi}
A vector field $X$ in an odd-dimensional Riemannian manifold $(M,g)$ is said to be a Beltrami field\footnote{Especially in three dimensions, it is sometimes required in the definition that the vector field preserves the Riemannian volume. In our discussion, we check that Sullivan-Thurston's is Beltrami and volume-preserving.} if it is everywhere parallel to its curl, i.e. $\omega=fX$ for some function $f\in C^\infty(M)$.
\end{defi}
In three dimensions, every volume-preserving Beltrami field is a stationary solution to the Euler equations. However, in higher odd dimensions there are volume-preserving Beltrami which are not Eulerisable \cite{C1}. On a manifold of any dimension (even or odd, at least three), we say that a stationary solution to the Euler equations is of Beltrami-type if its corresponding Bernoulli function is constant.

\subsection{Sullivan-Thurston's example}\label{ss:exTS}
We follow \cite{MS} to give an explicit description of Thurston's analytic counterexample to the periodic orbit conjecture. This example was inspired by Sullivan's paper \cite{S1}.\\

Let $H$ be the Heisenberg group with parameters $(x,y,z)\in \mathbb{R}^3$.
\begin{equation*}
H=\left(
\begin{matrix}
1 & x & z \\
0 & 1 & y \\
0 & 0 & 1
\end{matrix}\right)
\end{equation*}
The following lattice induces an action on $H$ via left matrix multiplication, where $(a,b,c) \in \mathbb{Z}^3$.
\begin{equation*}
\Lambda=\left(
\begin{matrix}
1 & a & c \\
0 & 1 & b \\
0 & 0 & 1
\end{matrix}\right)
\end{equation*}
We denote by $\pi$ the projection from $H\times \R^2$, which is equipped with coordinates $(x,y,z,t,u)$, to $M=H / \Lambda \times S^1\times S^1$. The vector fields 
\begin{align*}
V_1&=\cos t\pp{}{x}+ \sin t\left(\pp{}{y} + x\pp{}{z}\right)\\
V_2&= -\sin t \pp{}{x} + \cos t \left(\pp{}{y} +x\pp{}{z}\right)
\end{align*}
form together with $\pp{}{z}, \pp{}{t}, \pp{}{u}$ a global set of independent vector fields. 

Thurston's example is given by the vector field induced in the quotient space $M$ by
\begin{align*}
X&= \sin (2u)V_1 + 2\sin^2u\pp{}{t} - \cos^2u\pp{}{z} \\
&= \sin(2u)\cos t \pp{}{x} + \sin(2u)\sin t\left(\pp{}{y} + x \pp{}{z}\right) + 2\sin^2u\pp{}{t} - \cos^2u \pp{}{z}.
\end{align*} 
Let us check that $X$ is well defined in the quotient space. Denote by $\varphi_{a,b,c}$ the action of an element $(a,b,c)\in \Lambda$, with the obvious identification between points in $\mathbb{Z}^3$ and matrices in $\Lambda$. This map is given in coordinates by
\begin{align*}
\varphi_{a,b,c}:\mathbb{R}^3 &\longrightarrow \mathbb{R}^3\\
		(x,y,z) &\longmapsto (x',y',z')=(x+a,y+b,z+ay+c).
\end{align*}
From this component-wise expression, it is clear that $\pp{}{x}$ and $\pp{}{z}$ descend to the quotient. Computing the pushforward of the remaining term $\pp{}{y}+x\pp{}{z}$ we get
\begin{align*}
(\varphi_{a,b,c})_*\left(\pp{}{y}+x\pp{}{z}\right)&= \pp{}{y'}+a\pp{}{z'}+(x'-a)\pp{}{z'}\\
&=\pp{}{y'}+x'\pp{}{z'}.
\end{align*}
This proves that this term and the vector field $X$ descend to the quotient by the left action of $\Lambda$.\\

On $U=\{u\neq 0 \mod \pi \}$, the orbits of $W= \frac{1}{2\sin^2u} X$ are all closed of period $2\pi$. This can be checked by direct computation of the flow of $W$, which is $2\pi$-periodic \cite{MS}. In particular, as $u$ approaches $0$ or $\pi$, the orbits of $X$ have arbitrarily large periods. Furthermore, $X$ extends as $-\pp{}{z}$ along $u=0 \mod \pi$. Since there is an element in $\Gamma$ acting by translations along $z$, we deduce that the orbits of $X$ at $u= 0\mod \pi$ are also closed. The volume form $\mu=dx\wedge dy\wedge dz\wedge dt \wedge du$, which descends to $M$, is preserved by $X$:
\begin{align*}
d\iota_X \mu&= d\big[\sin(2u)\cos tdy\wedge dz\wedge dt\wedge du - \sin (2u)\sin t dx\wedge dz\wedge dt\wedge du \\ 
&+ (x\sin t \sin(2u) - \cos^2u)dx\wedge dy \wedge dt\wedge du 
-2\sin^2u dx\wedge dy \wedge dz\wedge du\big]\\
&=0.
\end{align*}
This shows that this counterexample is volume-preserving. In fact it is even Beltrami for some Riemannian metric. The one-form $\beta= \frac{1}{2}dt-dz + xdy$ is well defined in the quotient space since
\begin{align*}
\varphi_{a,b,c}^*(-dz'+x'dy')&=-(dz+ady)+(x+a)dy\\
			&=-dz+xdy.
\end{align*}
Furthermore, it satisfies
\begin{equation*}
\begin{cases}
\beta(X)= \sin^2u + \cos^2u=1 \\
(d\beta)^2=0.
\end{cases} 
\end{equation*}

It is now standard to construct a metric (cf. the last part of the proof of Proposition \ref{prop:s1euler}) such that the vector field $X$ has a vanishing curl and preserves the Riemannian volume: it is a volume-preserving Beltrami field. The fact that the curl of $X$ vanishes follows from the fact that $(d\beta)^2=0$ and the defining equation \eqref{eq:curl} of the curl of a vector field. A vector field with vanishing curl is called an irrotational (Beltrami) vector field.
\subsection{Proof of the main theorem}

We proceed with the proof of Theorem \ref{main}. We will make a few technical modifications to the argument in \cite[Section 5]{PRT} (which is inspired by the geodesible case \cite[Section 1.4.1]{R}) to prove that plugs are not Eulerisable. The strategy consist in finding a suitable family of tangent two chains whose boundary approximates a foliation cycle and whose flux (with respect to the adapted form) tends to zero. This will lead to a contradiction. These tangent $2$-chains look like ``rectangular sheets" in \cite{PRT}, in our proof instead, they are cylindrical. \newline

Let $M$ be a compact manifold, and let $X$ be a non-vanishing vector field all whose orbits are closed. These orbits define a one-dimensional foliation by compact leaves. Let us define the set 
 $$B_1= \{p\in M\enspace | \enspace \text{the length of the orbits is not bounded near }p\}.$$
 This set is referred to as the ``bad set" following Epstein \cite{Eps}. In general, one can show that this bad set is closed and nowhere dense (\cite[Section 6]{Eps} for the three dimensional case, \cite[Sections 4 and 6]{EMS} in general). We will assume that $B_1\neq \emptyset$, this is satisfied by any counterexample to the periodic orbit conjecture. Even if the lengths of the orbits depend on a fixed Riemannian metric, the fact that they admit an upper bound does not depend on the choice of metric. In dimension two, the foliation is of codimension one and an upper bound always exists \cite{Reeb,EMS}. Epstein \cite{Eps} proved that in dimension three there are no counterexamples to the periodic orbit conjecture. Summarizing, we can assume that the manifold is of dimension four or higher. \\

Assume further that $X$ is Eulerisable (or admits a strongly-adapted one-form). Then there is some one-form $\alpha$ such that 
\begin{equation}
\begin{cases}
\alpha(X)>0 \\
\iota_Xd\alpha=-dB
\end{cases}
\end{equation}
for some function $B\in C^\infty(M)$. We can suppose that $B$ is not constant (even though we do not need it in the proof), since otherwise $X$ would be geodesible by Gluck's characterization. We know that this cannot be the case due to Wadsley's theorem.

Foliations all whose leaves are compact were studied in depth by Edwards-Millett-Sullivan. We state here a version of the ``moving leaf proposition", confer \cite[Section 5]{EMS}.

\begin{prop}[Moving leaf proposition]\label{prop:moving}
Assume $B_1$ is compact (this is satisfied when $M$ is compact) and non-empty. Then there is a embedded family of leaves with trivial holonomy $L_t \subset M\setminus B_1$, $t\in [0,\infty)$ such that
\begin{itemize}
\item $L_t$ approaches $B_1$ in the sense that given any neighborhood $U$ of $B_1$, there is some $\hat t$ such that $L_{\hat t} \subset U$,
\item $\operatorname{length}(L_t)\rightarrow \infty$ when $t\rightarrow \infty$.
\end{itemize}
\end{prop}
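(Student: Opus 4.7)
\textbf{Proof proposal for Proposition~\ref{prop:moving}.}
The plan is to construct the family $L_t$ by following a transversal path from a leaf of trivial holonomy toward the bad set $B_1$, upgrading the pointwise divergence of orbit lengths near $B_1$ to a one-parameter family of leaves along which the length blows up. The setup is the good set $G = M \setminus B_1$: by the very definition of $B_1$, the length function $p \mapsto \operatorname{length}(L_p)$ is locally bounded on $G$ but unbounded in every neighborhood of every point of $B_1$. Standard facts from the Edwards--Millett--Sullivan theory then guarantee that, on $G$, leaves have finite holonomy, the leaves with trivial holonomy form a residual subset, and near any trivial-holonomy leaf the foliation is a locally trivial product bundle, so that a short embedded transversal produces a continuous family of diffeomorphic leaves whose length varies continuously.

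Next I would fix a point $p_0 \in \partial G \subset B_1$ and extract a sequence $q_n \in G$ with $q_n \to p_0$ and $\operatorname{length}(L_{q_n}) \to \infty$. Such a sequence is forced by the hypothesis $p_0 \in B_1$: otherwise length would be locally bounded at $p_0$, placing $p_0$ in $G$. A small generic perturbation ensures that each $q_n$ sits on a trivial-holonomy leaf.

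The core of the argument is then to interpolate $\{q_n\}$ into a continuous family $t \mapsto L_t$ of trivial-holonomy leaves inside $G$. I would choose the sequence $q_n$ adaptively, so that consecutive points can be joined by a short transversal arc along which length stays larger than $\operatorname{length}(L_{q_n})/2$; this is possible by the continuity of length near trivial-holonomy leaves. Concatenating these arcs and reparametrising by $t \in [0,\infty)$ (with $L_{t_n} = L_{q_n}$ for some $t_n \to \infty$) yields the required family: it stays in $G$, eventually enters any prescribed neighborhood of $B_1$ since $q_n \to p_0$, and has length tending to infinity because the lower bounds $\operatorname{length}(L_{q_n})/2$ do.

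The main obstacle is precisely this interpolation step: one must verify that two consecutive trivial-holonomy leaves near $B_1$ can actually be joined inside $G$ by a transversal arc along which length remains finite and the leaves are generically of trivial holonomy. This is what the full Edwards--Millett--Sullivan description of the good set as a generically locally trivial foliated bundle supplies, and it is what converts the sequence $\{L_{q_n}\}$ into an honest continuous moving leaf.
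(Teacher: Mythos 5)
The paper does not prove this proposition; it simply cites \cite[Section~5]{EMS}, so there is nothing in the paper to compare against line by line. Taken on its own terms, your sketch identifies the right raw materials from Edwards--Millett--Sullivan (the good/bad set dichotomy, local boundedness of length on $G=M\setminus B_1$, trivial holonomy being generic, local product structure near trivial-holonomy leaves) and the right starting point (a sequence $q_n\to p_0\in B_1$ with $\len(L_{q_n})\to\infty$, forced by $p_0\in B_1$). But the step you yourself label ``the main obstacle'' --- interpolating the discrete sequence $\{q_n\}$ into a continuous embedded family of leaves inside $G$ --- is not a technical verification one can wave at; it is precisely where the content of the EMS argument lies, and your proposal does not close it. Near $B_1$ the good set need not be path-connected in a way compatible with the foliation, the local product neighborhoods of trivial-holonomy leaves can shrink to zero size as $q_n\to p_0$, and nothing in the local bundle picture guarantees that consecutive $q_n$ lie in a common (or overlapping chain of) foliated product charts through which one can run a single transversal arc.

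Two further gaps are worth flagging. First, the proposition asks that $\len(L_t)\to\infty$ for the \emph{entire} family, not merely along the subfamily $L_{t_n}=L_{q_n}$; your construction as written only controls the length along the marked times and would permit the length to oscillate and dip between them, so some monotonisation or a different parametrisation is still needed. Second, the proposition requires \emph{all} leaves $L_t$ in the family to have trivial holonomy, whereas your arc only produces this ``generically''; one must argue (as EMS do via the structure of the holonomy near the bad set) that the path can be chosen to avoid the exceptional leaves entirely, not just almost everywhere. As it stands, the proposal is a plausible high-level roadmap that delegates the hardest step and two nontrivial side conditions to ``what EMS supplies'' without extracting the actual mechanism, so it is not yet a proof.
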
 

Let $L_{i}$ be an arbitrary sequence of leaves such $L_i$ approaches $B_1$ in the previous sense. Then Section 3 in \cite{EMS} shows how to find a subsequence (that we still denote $L_{i}$) and a sequence of natural numbers $n_i$ such that 
$ \langle \frac{1}{n_i} L_i, \cdot \rangle$ converges to a positive foliation current $\eta: \mathcal{Z}^1(M)\rightarrow \R$ supported in $B_1$. This current is in fact a cycle, as explained at the end of Section 2 in \cite{EMS}.

We will now choose a suitable initial sequence $L_i$ of leaves contained in family $\{L_t \enspace | \enspace t\in[0,\infty)\}$, i.e. each $L_i$ corresponds to $L_{t_i}$ for some $t_i \in [0,\infty)$. We want the sequence $L_{t_i}$ to satisfy the property that for each $i$
\begin{equation}\label{eq:monot}
\len(L_{t_i})\geq \len(L_{t}) \text{ for all }t\in [0, t_i].
\end{equation} This can be done for the following reason. By Proposition \ref{prop:moving}, $L_t$ is a locally compact invariant subset of $M$ with trivial holonomy. Then by Proposition 4.1 in \cite{EMS}, the length function is continuous along $L_t$. We can now choose each $t_i$ so that the above condition is satisfied: simply consider the intervals $t\in [0,i]$, and pick the value $t_i$ for which the length of $L_{t_i}$ is a maximum in $[0,i]$. Such maximum always exists by the extreme value theorem.

From the discussion above, we can find a subsequence of leaves, which we still denote by $L_{t_i}$ to simplify the notation, and some positive integers $n_i$ so that $\lim \langle \frac{1}{n_i} L_{t_i}, \cdot \rangle$ converges to a positive foliation cycle $\eta$ with support in $B_1$.  Since the length of $L_{t_i}$ tends to infinity, the integers $n_i$ satisfy $n_i \rightarrow \infty$.

Consider the sequence of tangent two chains
$$ T_i= \{ L_t \enspace | \enspace t\in [0,t_i]\}, $$
which are increasingly larger parts of the moving leaf family. The sequence of tangent two chains $A_i=\frac{1}{n_i}T_{i}$ satisfies
$$ \lim_{i\rightarrow \infty} \partial A_i= \lim_{i\rightarrow \infty} \frac{1}{n_i}(L_{t_i}-L_0)= \eta, $$
since the initial leaf $L_0$ has finite length and the $n_i$ go to infinity.

\begin{Remark}
As suggested by Gluck \cite{Gl}, a sequence of two chains such as the $(A_i)_{i\in \mathbb{N}}$ readily contradicts Sullivan's characterization (Theorem \ref{thm:sul}). This shows that $X$ is not geodesible and gives an alternative proof of Wadsley's theorem. In our discussion, we carefully chose the family $A_i$ with the additional property \eqref{eq:monot} which will be used to show that $X$ is not Eulerisable.
\end{Remark}

Each $T_i$ is just an embedded family of closed curves, hence diffeomorphic to a cylinder. Denoting $\lambda=\frac{\alpha}{\alpha(X)}$, the exterior derivative of $\alpha$ decomposes as $d\alpha= -\lambda \wedge dB + \omega$ for some two form $\omega$ such that $\iota_X \omega=0$. Observe that the function $B$ is constant along each curve $L_t$: this follows from the fact that $\iota_Xd\alpha=-dB$, which implies that $\iota_XdB=0$. In particular, we can write $B(t)$ when restricting $B$ to the subset $L_t \subset M$. The integral $A_i(d\alpha)$ can be computed as
\begin{align}
A_i (d\alpha) &=\frac{1}{n_i} \int_{T_i} d\alpha \\
   &= \frac{1}{n_i} \int_{T_i} \omega +\frac{1}{n_i} \int_{T_i} -\frac{1}{\alpha(X)}\alpha \wedge dB\\
   &= \frac{1}{n_i} \int_{T_i} -\frac{1}{\alpha(X)}\alpha \wedge dB, \label{eq:integral}
\end{align}
where we used that $\iota_X\omega=0$.  We want to prove that there is some subsequence $A_{i_r}$ of $A_i$ such that $A_{i_r}(d\alpha)\rightarrow 0$. This would lead to a contradiction because $A_{i_{r}}(d\alpha)=\partial A_{i_r}(\alpha)\rightarrow \eta(\alpha)$, but $\eta$ is a foliation cycle and $\alpha$ is positive on $X$ implying that $\eta(\alpha)>0$.\newline

We will use the motonicity property \eqref{eq:monot} and the lemmas below to prove the existence of the subsequence $A_{i_r}$.

\begin{lemma}\label{lem:aux1}
 For a given arbitrary $\varepsilon>0$, there is some $t_l\in [0,\infty)$ and a sequence $(t_{i_k})_{k\in \mathbb{N}} \subset (t_l,\infty)$ such that $i_k\rightarrow \infty$ and $|B(t_{i_k})-B(t_l)| < \varepsilon$ for all $k$. Furthermore, we can choose the sequence $(t_{i_k})_{k\in \mathbb{N}}$ such that for all $k\in \mathbb{N}$ we have
$$ \frac{\operatorname{length}(L_{t_l})}{n_{i_k}} < \varepsilon. $$

\end{lemma}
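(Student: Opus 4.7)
The plan is to combine two compactness facts: first, that the Bernoulli function $B$ is bounded on the compact manifold $M$, and second, that the integers $n_i$ tend to infinity. Both parts of the lemma are essentially Bolzano--Weierstrass extractions.

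For the first conclusion, I would start by recalling that $B\in C^\infty(M)$ is continuous on a compact manifold, hence bounded, and that $B$ is constant on each leaf $L_t$ (because $\iota_XdB=-\iota_X\iota_Xd\alpha=0$, which was already used above Equation~\eqref{eq:integral}). So the real sequence $(B(t_i))_{i\in\mathbb{N}}$ is a bounded sequence of reals and admits a convergent subsequence $B(t_{j_k})\to c\in\mathbb{R}$ by Bolzano--Weierstrass. Choose $t_l:=t_{j_{k_0}}$ for some $k_0$ large enough that $|B(t_l)-c|<\varepsilon/2$. Then for every $k>k_0$ the triangle inequality gives $|B(t_{j_k})-B(t_l)|<\varepsilon$, and by construction $t_{j_k}>t_l$. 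Relabeling the tail $(t_{j_k})_{k>k_0}$ as $(t_{i_k})_{k\in\mathbb{N}}$ produces a subsequence inside $(t_l,\infty)$ with $i_k\to\infty$ satisfying the required bound on $B$.

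For the second conclusion, note that once $t_l$ is fixed, $\operatorname{length}(L_{t_l})$ is simply a finite positive constant, since $L_{t_l}$ is a single closed orbit lying in $M\setminus B_1$. Because $n_i\to\infty$, the ratios $\operatorname{length}(L_{t_l})/n_{i_k}$ tend to $0$ as $k\to\infty$, so after discarding finitely many initial terms of the subsequence one obtains $\operatorname{length}(L_{t_l})/n_{i_k}<\varepsilon$ for every $k$. This discarding does not disturb the first conclusion, because thinning a subsequence cannot worsen the closeness estimates on $B$.

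I do not anticipate any serious obstacle: the lemma is a bookkeeping statement that packages the boundedness of $B$ together with the divergence of $n_i$. The only subtlety worth being careful about is the order of quantifiers, namely that $t_l$ must be selected \emph{before} extracting the final tail subsequence (so that $\operatorname{length}(L_{t_l})$ is a fixed constant when we take $n_{i_k}\to\infty$), which is precisely why we pick $t_l$ as one of the early terms of the Bolzano--Weierstrass subsequence rather than as a point chosen independently.
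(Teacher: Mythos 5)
Your argument is correct and follows the same Bolzano--Weierstrass route as the paper: extract a convergent subsequence of $(B(t_i))$, anchor $t_l$ near the limit, apply the triangle inequality, and exploit $n_i\to\infty$ for the second bound. The assertion ``by construction $t_{j_k}>t_l$'' is not quite automatic from how the $t_i$ are chosen, but it holds because $t_i\to\infty$ (as $\operatorname{length}(L_{t_i})\to\infty$ and the length function is continuous along the moving leaf family), and in any case discarding finitely many initial terms --- which you already do for the length estimate --- repairs it.
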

\begin{proof}
Fix some $\varepsilon>0$. The function $B$ is a smooth function on the compact manifold $M$ and hence is bounded. In particular there is some constant $K$ such that $|B(t)|<K$ for all $t\in [0,\infty)$. The sequence of numbers $B(t_i)$ is a bounded sequence, so there is a subsequence which is convergent to $y \in \mathbb{R}$. By continuity of $B$, $B(t)\rightarrow y$ and we can find a $t_l\in (t_i)_{i\in \mathbb{N}}$ such that $|B(t_l)-y|<\frac{\varepsilon}{2}$. Similarly, we can find a subsequence $t_{i_k} \rightarrow \infty$ such that $|B(t_{i_k})-y|<\frac{\varepsilon}{2}$ for each $k$. In particular $|B(t_{i_k})-B(t_l)|\leq |B(t_{i_k})-y| +|y-B(t_l)|<\varepsilon$ as we wanted.

Recall that the sequence $n_i$ satisfies that $n_i \rightarrow \infty$. Since $t_l$ is fixed for the given $\varepsilon$, taking $k$ greater than some $k_0$ we can ensure that
$$ \frac{\operatorname{length}(L_{t_l})}{n_{i_k}}<\varepsilon.$$
The value of $\varepsilon$ was arbitrary, so this concludes the proof of the lemma.
\end{proof}

\begin{lemma}\label{lem:aux2}
There exist positive constants $C_1$ and $C_2$ such that
\begin{itemize}
\item $\frac{1}{\min_{M}|X|}|B(t)-B(0)|<C_1$ for any $t\in [0,\infty)$,
\item $\frac{1}{\min_M|X|} \frac{\operatorname{length}(L_{t_i})}{n_i} < C_2$ for every $i$ big enough.
\end{itemize}
\begin{proof}
The vector field $X$ is non-vanishing, so the value of $\frac{1}{\operatorname{min}_M|X|}$ is a well defined positive number. Let $K$ be an upper bound of $|B|$ over $M$. Then 
$$
\frac{1}{\min_{M}|X|}|B(t)-B(0)|<C_1=\frac{1}{\min_{M}|X|}2K,$$ 
for any value of $t$. \newline

For the second inequality, we simply use the fact that $\lim \langle \frac{1}{n_i} L_{t_i},\cdot \rangle$ tends to a smooth  positive foliation cycle.  This implies that $\frac{\operatorname{length}(L_{t_i})}{n_i}$ is a bounded sequence, let $K_2$ be some upper bound. Then $C_2=\frac{1}{\min_{M}|X|}K_2$ satisfies the second inequality.
\end{proof}
\end{lemma}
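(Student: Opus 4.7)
\medskip

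\noindent
The plan is to deduce both inequalities from compactness of $M$ combined with the current convergence $\frac{1}{n_i}\langle L_{t_i},\cdot\rangle\to\eta$ established immediately before the lemma. Since $X$ is continuous and non-vanishing on the compact manifold $M$, the number $\min_M|X|$ is a positive real, so the prefactor $1/\min_M|X|$ contributes only a harmless constant in both items and can essentially be ignored.

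The first bound is immediate: the smooth function $B$ attains its extrema on $M$, so $|B|\leq K$ for some $K>0$, giving $|B(t)-B(0)|\leq 2K$ uniformly in $t$; one takes $C_1 = 2K/\min_M|X|$.

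For the second bound I would exploit the fact that weak convergence of currents makes $\frac{1}{n_i}\langle L_{t_i},\omega\rangle$ a bounded (in fact convergent) sequence for every fixed smooth one-form $\omega$. Applied to $\omega = \alpha$, the sequence $\frac{1}{n_i}\int_{L_{t_i}}\alpha$ is bounded. To convert this into an estimate on $\frac{\len(L_{t_i})}{n_i}$, parametrize each closed orbit $L_{t_i}$ by arclength $s$, whose unit tangent is $X/|X|$, so that
\[
\int_{L_{t_i}}\alpha = \int_0^{\len(L_{t_i})}\frac{\alpha(X)}{|X|}\,ds.
\]
Both $\alpha(X)$ and $1/|X|$ are strictly positive continuous functions on the compact manifold $M$, so the integrand is bounded below by some $m>0$. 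Hence $\len(L_{t_i})\leq m^{-1}\int_{L_{t_i}}\alpha$, and dividing by $n_i$ yields a bounded sequence; absorbing the factor $1/\min_M|X|$ supplies $C_2$. The only mild subtlety is translating weak convergence of currents into a genuine length bound, but the arclength reparametrization together with the positivity of $\alpha(X)$ closes this gap cleanly, so no serious obstacle is expected.
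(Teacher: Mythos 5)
Your proof is correct and follows essentially the same route as the paper's: the first bullet is the same trivial compactness estimate on $B$, and the second is likewise deduced from the weak convergence of $\frac{1}{n_i}\langle L_{t_i},\cdot\rangle$ to the foliation cycle $\eta$. Your treatment of the second item is in fact slightly more explicit than the paper's --- you pair with the concrete one-form $\alpha$ and use the positivity of $\alpha(X)$ together with compactness to bound $\len(L_{t_i})$ by a constant times $\frac{1}{n_i}\int_{L_{t_i}}\alpha$, whereas the paper simply asserts that current convergence yields boundedness of $\len(L_{t_i})/n_i$ --- but the underlying mechanism is the same.
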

Given an arbitrary $\varepsilon$, construct a sequence $t_{i_k}$ using Lemma \ref{lem:aux1}. The cylinder $T_{i_k}=\{L_{t}\enspace | \enspace t\in[0,t_{i_k}]\}$, can be parametrized in the following way. Assign smoothly to each leaf $L_s$ with $s\in [0,t_{i_k}]$ a point $p_s\in L_s$. Then each point $q\in L_s$ is given by the flow at some time $\tau_q$ applied to the point $p_s$ . That is, if $\varphi_t$ denotes the flow of $X$, we have
$$ q= \varphi_{\tau_q}(p_s).$$
Taking the function $\tau_q$ smoothly across the leafs, we obtain a function $\tau$ defined in $T_{i_k}$. This shows that each point $q$ in $T_{i_k}$ is given by a pair $(s,\tau)$, where $s\in [0,t_{i_l}]$ and $\tau \in [0, l(s)]$. Here $l(s)$ is the period of $X$ on the leaf $L_s$, hence $\varphi_{l(s)}(p)=p$. We denote by $\theta$ the coordinate $\tau$ because of this periodicity property. The vector field $X$ satisfies by construction $X=\pp{}{\theta}$. We can now explicitly write the integral $A_{i_k}(d\alpha)$ using equation \eqref{eq:integral}.
\begin{align*}
|A_{i_k}(d\alpha)| &=\frac{1}{n_{i_k}} \Big| \int_{0}^{t_l} \int_{L_t} -\frac{1}{\alpha(X)}\alpha \wedge dB + \int_{t_l}^{t_{i_k}} \int_{L_t} -\frac{1}{\alpha(X)}\alpha \wedge dB \Big| \\
			&\leq \frac{1}{n_{i_k}}\Big| \int_{0}^{t_l} \int_{L_t} -\frac{1}{\alpha(X)}\alpha \wedge dB \Big| +  \frac{1}{n_{i_k}}\Big| \int_{t_l}^{t_{i_k}} \int_{L_t} -\frac{1}{\alpha(X)}\alpha \wedge dB \Big| \\
			&= \frac{1}{n_{i_k}}\Big| \int_{0}^{t_l} \int_{L_t} -\frac{\alpha(\tpp{}{\theta})}{\alpha(X)} dB(\tpp{}{s}) d\theta ds\Big| +  \frac{1}{n_{i_k}}\Big| \int_{t_l}^{t_{i_k}} \int_{L_t} -\frac{\alpha(\tpp{}{\theta})}{\alpha(X)} dB(\tpp{}{s}) d\theta ds \Big|\\
			&=  \frac{1}{n_{i_k}}\Big| \int_{0}^{t_l}  -dB(\tpp{}{s}) (\int_{L_s} d\theta) ds\Big| +  \frac{1}{n_{i_k}}\Big| \int_{t_l}^{t_{i_k}} -dB(\tpp{}{s}) (\int_{L_t}  d\theta) ds \Big|   \\
			&\leq  \frac{\operatorname{length}(L_{t_l})}{n_{i_k}\min_M |X|} |B(t_l)-B(0)|+ \frac{\operatorname{length}(L_{t_{i_k}})}{n_{i_k}\min_M |X|}|B(t_k)-B(t_l)| \\
			&< C_1\varepsilon + C_2 \varepsilon.
\end{align*}
We used the triangle inequality, the fact that $dB(\partial_{\theta})=0$, Lemmas \ref{lem:aux1} and \ref{lem:aux2} and equation \eqref{eq:monot} in this computation. The initial $\varepsilon$ was arbitrary while $C_1, C_2$ are fixed and do not depend on $\varepsilon$. 

Take a sequence of positive numbers $\varepsilon_r \rightarrow 0$. For each $\varepsilon_r$, we can use Lemma \ref{lem:aux1} to construct a subsequence $r_k$ satisfying $A_{r_k}(d\alpha)< C_1\varepsilon_r+C_2\varepsilon_r$ for all $k\in \mathbb{N}$. Using a diagonal argument, we choose for each $r$ an element $i_r \in (r_k)_{k\in \mathbb{N}}$ such that $i_r \rightarrow \infty$ as $r\rightarrow \infty$. The resulting subsequence $i_r$ satisfies that $A_{i_r}(d\alpha) < C_1 \varepsilon_r + C_2 \varepsilon_r$, since each $i_r$ is in $(r_k)$. We deduce that $A_{i_r}(d\alpha)\rightarrow 0$ when $r\rightarrow \infty$, leading to a contradiction with the fact that $A_{i_r}(d\alpha)=\partial A_{i_r}(\alpha)\rightarrow \eta(\alpha)>0$ when $r\rightarrow \infty$. This proves Theorem \ref{main}.

\begin{Remark}
Our conclusion, even if it leads to a contradiction, does not explicitly contradict Theorem \ref{thm:eulerisable}. We obtained a sequence of two chains whose flux goes to zero and whose boundary approximates a foliation cycle. An explicit contradiction with Theorem \ref{thm:eulerisable} would require to find a family of exactly zero flux two-chains whose boundary approximates a foliation cycle. This zero-flux sequence can be obtained as in \cite[Section 2, page 5]{PRT} via a modification of the $2$-chains $A_{i_r}$, by substracting some weighted $2$-chain which cancels the flux of $A_{i_r}$.
\end{Remark}

\section{Wadsley's theorem and hydrodynamics}

The classical characterization of vector fields satisfying the periodic orbit conjecture, that we previously cited and which is due to Wadsley \cite{W}, has the following precise statement.
\begin{theorem}
Let $M$ be a compact manifold. A non-vanishing vector field $X$ all whose orbits are closed is geodesible if and only if the set of lengths of the orbits of $X$ admits an upper bound.
\end{theorem}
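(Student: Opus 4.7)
The proof rests on Gluck's characterization recalled in Section~2: $X$ is geodesible if and only if there exists a one-form $\alpha$ with $\alpha(X)>0$ and $\iota_X d\alpha = 0$. The plan is to prove both directions of the equivalence between geodesibility and bounded orbit lengths.

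The direction geodesible $\Rightarrow$ bounded lengths is essentially a by-product of Section~3, and in fact the remark following the construction of $A_i$ in the proof of Theorem~\ref{main} already announces this. If the lengths were not bounded, compactness of $M$ would make the bad set $B_1$ nonempty, and the moving leaf proposition (Proposition~\ref{prop:moving}) would produce a sequence of tangent $2$-chains $A_i$ whose boundaries converge to a nontrivial foliation cycle $\eta$, exactly as in the proof of Theorem~\ref{main}. This directly contradicts Sullivan's topological criterion for geodesible fields (Theorem~\ref{thm:sul}). Hence $B_1 = \emptyset$, and by compactness of $M$ the period function $T$ is bounded.

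For the converse, suppose $T \leq L$ everywhere on $M$. The plan is to build a flow-invariant one-form by averaging along each orbit. Pick any auxiliary Riemannian metric $g_0$ and set $\alpha_0 = g_0(X,\cdot)/g_0(X,X)$, so $\alpha_0(X) \equiv 1$. Define
\begin{equation*}
\alpha_p = \frac{1}{T(p)} \int_0^{T(p)} (\phi_t^* \alpha_0)_p\, dt,
\end{equation*}
where $\phi_t$ is the flow of $X$. The substitution $u = s+t$ together with the $T(p)$-periodicity of the integrand gives $\phi_s^* \alpha = \alpha$ for every $s$, hence $L_X \alpha = 0$. Since $\alpha(X) \equiv 1$, Cartan's formula then yields $\iota_X d\alpha = 0$, and Gluck's characterization delivers the desired Riemannian metric.

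The delicate part is the regularity of the averaged one-form, which reduces to the regularity of $T$ as a function on $M$. The bounded-length hypothesis forces $B_1 = \emptyset$, but this alone only gives local boundedness of $T$; one needs the finer Edwards--Millett--Sullivan analysis to collapse the entire Epstein hierarchy and obtain continuity of $T$ (the relevant structural results are already invoked in Section~3). Once $T$ is continuous, smoothness of $\alpha$ can be recovered by first convolving $\alpha_0$ along each orbit with a smooth bump function, or equivalently by smoothing $T$ on the local leaf space, after which the identities $\alpha(X) > 0$ and $\iota_X d\alpha = 0$ are preserved. This regularity step is the main technical obstacle; the purely algebraic averaging identity is essentially formal.
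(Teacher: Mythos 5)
The forward direction (geodesible $\Rightarrow$ bounded lengths) is fine and matches the Remark following the construction of $A_i$ in Section~3: you invoke Proposition~\ref{prop:moving} and Sullivan's criterion (Theorem~\ref{thm:sul}) exactly as the paper does.

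The converse direction, however, contains a genuine error. The averaged form
\[
\alpha_p \;=\; \frac{1}{T(p)}\int_0^{T(p)} (\phi_t^*\alpha_0)_p\, dt
\]
is \emph{not} flow-invariant, because the integrand $u\mapsto(\phi_u^*\alpha_0)_p$ is in general not $T(p)$-periodic. One has
\[
(\phi_{u+T(p)}^*\alpha_0)_p \;=\; (\phi_u^*\alpha_0)_p \circ (d\phi_{T(p)})_p,
\]
and the differential $(d\phi_{T(p)})_p$ of the first-return map need not be the identity on $T_pM$: it fixes $X_p$, but acts non-trivially on a complement. A concrete instance: on $S^3$ with the weighted circle action $e^{it}\cdot(z_1,z_2)=(e^{it}z_1,e^{2it}z_2)$, the orbit through $(0,z_2)$ has minimal period $\pi$, while $d\phi_\pi$ at that point is $-\mathrm{id}$ on the $z_1$-plane. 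So your change-of-variables step, and hence $\phi_s^*\alpha=\alpha$ and the subsequent Cartan computation, break down. Relatedly, the claim that the Edwards--Millett--Sullivan analysis yields continuity of $T$ when $B_1=\emptyset$ is not correct: the minimal period is only lower semicontinuous and jumps down along the exceptional (non-trivial finite holonomy) leaves of the resulting generalized Seifert fibration; EMS give continuity along leaves with trivial holonomy, not globally.

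Both issues are precisely what Wadsley's argument, as reproduced in Proposition~\ref{prop:s1euler}, is designed to circumvent. The bounded-length hypothesis is used \emph{first} to produce a smooth positive function $f$ so that $\tilde X=fX$ generates a genuine $S^1$-action (a non-trivial step, cited there from Besse); only then does one average, over the compact group $S^1$, for which $\phi_{2\pi}=\mathrm{id}$ and the invariance argument is sound. Your idea of averaging a one-form rather than the metric would then go through: averaging $\alpha_0$ (normalized so $\alpha_0(\tilde X)=1$) over the $S^1$-action gives $\mathcal{L}_{\tilde X}\alpha=0$ and $\alpha(\tilde X)=1$, hence $\iota_{\tilde X}d\alpha=0$, and therefore $\alpha(X)=1/f>0$ with $\iota_Xd\alpha=0$. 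The missing idea is the reparametrization to an $S^1$-action prior to averaging.
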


Taking a careful look at Wadsley's averaging argument, we can show that any vector field satisfying the periodic orbit conjecture is a Beltrami-type stationary solution to the Euler equations for some metric. Not every geodesible vector field is volume-preserving, and notice that we do not impose this condition.
\begin{prop}\label{prop:s1euler}
Let $X$ be a vector field all whose orbits are closed on a compact manifold $M$. If the set of lengths of the orbits admits an upper bound, then $X$ is a Beltrami-type stationary solution to the Euler equations for some metric.
\end{prop}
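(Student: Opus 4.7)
The plan is to extract from the bounded-length hypothesis the stronger consequence that $X$ can be reparametrized to a periodic flow. Since all orbits of $X$ are closed with uniformly bounded length, the assignment $p \mapsto T(p)$ sending each point to the period of its orbit is a smooth positive function on $M$, constant along orbits of $X$, and the rescaled vector field $Y = X/T$ generates a smooth $S^1$-action on $M$. All constructions below will be produced by averaging over this compact group action.

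Choose any auxiliary Riemannian metric $g_0$ on $M$ and set $\alpha_0 = g_0(Y,\cdot)/g_0(Y,Y)$, so that $\alpha_0(Y) = 1$. Averaging $\alpha_0$ over the $S^1$-action yields an invariant one-form $\bar\alpha$ still satisfying $\bar\alpha(Y) = 1$; Cartan's formula applied to $\mathcal{L}_Y\bar\alpha = 0$ and $\iota_Y\bar\alpha = 1$ gives $\iota_Y d\bar\alpha = 0$, and hence $\iota_X d\bar\alpha = 0$. The distribution $\ker\bar\alpha$ is $Y$-invariant because $\bar\alpha([Y,Z]) = Y(\bar\alpha(Z)) = 0$ for $Z \in \ker\bar\alpha$; equip it with any smooth fiberwise inner product and average to obtain an $S^1$-invariant metric $h$ on $\ker\bar\alpha$. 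Now define the global metric $g$ on $M$ by $g(X,X) = T$, by declaring $X$ to be $g$-orthogonal to $\ker\bar\alpha$, and by $g|_{\ker\bar\alpha} = h$. By construction one has $g(X,\cdot) = \bar\alpha$.

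What remains is to verify that $X$ is a Beltrami-type steady Euler flow for $g$: since $\iota_X d\bar\alpha = 0$ gives a constant Bernoulli function, only $d\iota_X\mu_g = 0$ is left. The key point is that $g$ is itself $Y$-invariant. Indeed $[Y,X] = 0$ (as $X = TY$ and $Y(T) = 0$), and a direct computation gives $(\mathcal{L}_Y g)(X,X) = Y(T) = 0$, then $(\mathcal{L}_Y g)(X,Z) = -\bar\alpha([Y,Z]) = 0$ for $Z \in \ker\bar\alpha$ (using $\mathcal{L}_Y\bar\alpha = 0$), and finally $(\mathcal{L}_Y g)(Z,W) = (\mathcal{L}_Y h)(Z,W) = 0$. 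Hence $Y$ is a Killing field for $g$, so $\mathcal{L}_Y\mu_g = 0$. Writing $\iota_X\mu_g = T\iota_Y\mu_g$ yields $d\iota_X\mu_g = dT \wedge \iota_Y\mu_g$, which vanishes because $\iota_Y dT = Y(T) = 0$ forces $dT \wedge \iota_Y\mu_g = 0$ in any local frame containing $Y$. The main obstacle is the coherent bookkeeping of these averaging steps; what makes it all work is exactly the Wadsley-type observation that the period function $T$ is constant along orbits, so that the vertical component $g(X,X) = T$ automatically remains $S^1$-invariant without any further adjustment.
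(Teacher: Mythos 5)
Your argument is correct and reaches the same conclusion as the paper, but the route differs at two points worth flagging. The paper first averages a metric $g_1$ to get an invariant $g_2$, then invokes Wadsley's Lemma 3.1 to produce a conformal metric $g_3$ with $g_3(\tilde X,\tilde X)=1$ before extracting the one-form $\alpha=g_3(\tilde X,\cdot)$; you bypass this entirely by averaging a one-form $\alpha_0$ with $\alpha_0(Y)=1$ directly, which is a genuine simplification. Second, the paper achieves $d\iota_X\mu=0$ by \emph{prescribing} the volume form in advance (it sets $\mu=f\mu_2$, which $X$ is shown to preserve, and then tunes the conformal factor of $g$ on $\ker\alpha$ so that the Riemannian volume equals this $\mu$); you instead take whatever Riemannian volume $\mu_g$ your invariant metric produces and verify $d\iota_X\mu_g=dT\wedge\iota_Y\mu_g=0$ by the Cartan-type identity $0=\iota_Y(dT\wedge\mu_g)=(\iota_YdT)\mu_g-dT\wedge\iota_Y\mu_g$, together with $\mathcal{L}_Y\mu_g=0$. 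This is cleaner and avoids the conformal adjustment, at the cost of losing the flexibility to prescribe the volume form. One imprecision you should repair: the \emph{least}-period function $p\mapsto T(p)$ is generally only lower semicontinuous, not smooth (it drops on orbits with nontrivial isotropy, as for exceptional fibers of a Seifert fibration). What is true, and what the paper cites from Besse (p.~218), is that bounded lengths guarantee the existence of \emph{some} smooth positive function $T$, constant on orbits, such that $Y=X/T$ generates an $S^1$-action; $T(p)$ is then a (not necessarily minimal) common period. Your computations use only the properties $Y(T)=0$, $X=TY$, and the existence of the $S^1$-action, so everything goes through once $T$ is understood in this sense and the Besse/Wadsley reparametrization result is invoked for its existence.
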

\begin{proof}
Let us recall the averaging process in \cite[Section 3]{W}. Since the lengths of the orbits are bounded, there exists some positive function $f\in C^\infty(M)$ (see for example \cite[page 218]{Be}) such that the flow of $\tilde X=fX$ defines an $S^1$-action on $M$:
\begin{align*}
\rho: S^1\times M &\longrightarrow M\\
		(\theta,p) &\longmapsto \rho_{\theta}(p).
\end{align*}
Choose any metric $g_1$ in $M$, we can average it over the action with respect to the Haar measure on $S^1$, obtaining another metric
$$ g_2=\int \rho^*g_1.$$
By construction, this metric is invariant by the flow of $\tilde X$ and so $\tilde X$ is a Killing vector field with respect to $g_2$. By Lemma 3.1 in \cite{W}, there is some other metric $g_3$, conformal to $g_2$, such that $\tilde X$ remains Killing with respect to $g_3$ and $g_3(\tilde X,\tilde X)=1$. In addition, the trajectories of $\tilde X$ are geodesics with respect to $g_3$ parametrized by arc-length. This implies that the one-form $\alpha=g_3(\tilde X, \cdot)$ satisfies 
\begin{equation}\label{eq:oneform}
\begin{cases}
\alpha(\tilde X)=1 \\
\iota_{\tilde X}d\alpha=0
\end{cases}.
\end{equation}
Furthermore, since $\tilde X$ is Killing, it preserves the Riemannian volume induced by $g_2$ which we denote by $\mu_2$. Since $\tilde X$ is just a reparametrization of $X$, Equation \eqref{eq:oneform} implies that $\alpha(X)>0$ and $\iota_{X}d\alpha=0$. On the other hand, $X$ preserves the volume form $\mu=f\mu_2$ since 
$$\mathcal{L}_X\mu=d \iota_X\mu=d(\iota_{\frac{1}{f}\tilde X}f\mu_2)=d\iota_{\tilde X}\mu_2=0,$$ where we used that $\tilde X$ preserves $\mu_2$. To conclude, we construct a metric $g$ such that
\begin{itemize}
\item $g(X,\cdot)=\alpha$,
\item $X$ is orthogonal to $\ker \alpha$,
\item the conformal factor of the metric on $\ker \alpha$ is taken such that the Riemannian volume is $\mu$.
\end{itemize}
For this metric, the vector field $X$ is a Beltrami-type stationary solution to the Euler equations.
\end{proof}
In particular, vector fields satisfying the periodic orbit conjecture are characterized by the property of being a Beltrami-type stationary Euler flow, since the converse statement follows immediately from Wadsley's theorem. Explicit examples of $S^1$-actions without fixed points are easy to construct, e.g. the Hopf field on an odd-dimensional sphere (which is already a Beltrami-type stationary Euler flow for the round metric). Corollary \ref{coro} follows from Theorem \ref{main} and Proposition \ref{prop:s1euler}.


\begin{thebibliography}{99}

\bibitem{AK} V.I. Arnold, B. Khesin. \emph{Topological Methods in Hydrodynamics}. Springer, New York, 1999.

\bibitem{Be} A.L. Besse. \emph{Manifolds all of whose geodesics are closed}. Volume 93 of Ergebnisse der Mathematik und ihrer Grenzgebiete [Results in Mathematics and Related Areas]. Springer-Verlag, Berlin-New York, 1978. With appendices by D.B.A. Epstein, J.-P. Bourguignon, L. B\'erard-Bergery, M. Berger and J.L. Kazdan.

\bibitem{C1} R. Cardona. \emph{Steady Euler flows and Beltrami fields in high dimensions}. Ergodic Theory Dynam. Systems (2020), 1-24. doi:10.1017/etds.2020.124.

\bibitem{CV} K. Cieliebak, E. Volkov. \emph{A note on the stationary Euler equations of hydrodynamics}. Ergodic Theory Dynam. Systems, 37 (2017), 454-480. 

\bibitem{EMS} R. Edwards, K. Millett, D. Sullivan. \emph{Foliations with all
leaves compact}. Topology 16(1977), 13-32.

\bibitem{Eps} D.B.A. Epstein. \emph{Periodic flows on $3$-manifolds}. Ann. of Math. 95 (1972), 68-82.

\bibitem{EV} D.B.A. Epstein, E. Vogt. \emph{A Counterexample to the Periodic Orbit Conjecture in Codimension 3}. Ann. of Math., (1978), 108(3), second series, 539-552. 

\bibitem{Gl} H. Gluck. \emph{Dynamical behavior of geodesic fields}. Global theory of dynamical systems (Proc. Internat. Conf.,
Northwestern Univ., Evanston, Ill., 1979), Lecture Notes in Math., 819, 190-215, Springer, 1980.

\bibitem{Morg} F. Morgan. {\it Geometric measure theory: a beginner's guide}. 5th edition. Academic press, 2016.

\bibitem{MS} P. Mounoud, S. Suhr. \emph{Pseudo-Riemannian geodesic foliations by circles}. Math. Z. 274, 225-238 (2013).

\bibitem{PRT} D. Peralta-Salas, A. Rechtman, F. Torres de Lizaur. \emph{A characterization of 3D Euler flows using commuting zero-flux homologies}. Ergodic Theory Dynam. Systems (2020), 1-16. doi:10.1017/etds.2020.25.

\bibitem{R} A. Rechtman. \emph{Use and disuse of plugs in foliations}. PhD Thesis, ENS Lyon, 2009.

\bibitem{Reeb} G. Reeb. \emph{Sur certaines propiétés topologiques des variétés feuilletées}. Actual. scient. ind. 1183 (1952).

\bibitem{S1} D. Sullivan. \emph{A counterexample to the periodic orbit conjecture}.
Publ. IHES 46(1976), 5-14.

\bibitem{S2} D. Sullivan. \emph{A foliation of geodesics is characterized by having no ``tangent homologies"}. J. Pure
Appl. Algebra 13 (1978), no. 1, 101-104.

\bibitem{Tao} T. Tao. \emph{On the universality of potential well dynamics}. Dyn. PDE 14 (2017), 219-238.

\bibitem{W} A.W. Wadsley. \emph{Geodesic foliations by circles}. J. Diff. Geom.
10(1975), 541-549.

\end{thebibliography}
\end{document}